\tikzstyle{legend_general}=[rectangle, rounded corners, thin,
\DeclareMathOperator{\col}{col}
\DeclareMathOperator{\Bob}{Bob}
\newtheorem{theorem}{Theorem}[section]
\newtheorem{question}[theorem]{Question}
\newtheorem{corollary}[theorem]{Corollary}
\theoremstyle{definition}
\title{Graph colorings with restricted bicolored subgraphs: II. The graph coloring game}
\author{Peter Bradshaw}
\address{Department of Mathematics, Simon Fraser University, Burnaby, BC, Canada}
\email{pabradsh@sfu.ca}
\thanks{The author of this work has been partially supported by a supervisor's grant from the Natural Sciences and Engineering Research Council of Canada (NSERC)}
\begin{document}
\maketitle
\begin{abstract}
We consider the \emph{graph coloring game}, a game in which two players take turns properly coloring the vertices of a graph, with one player attempting to complete a proper coloring, and the other player attempting to prevent a proper coloring. We show that if a graph $G$ has a proper coloring in which the \emph{game coloring number} of each bicolored subgraph is bounded, then the \emph{game chromatic number} of $G$ is bounded. As a corollary to this result, we show that for two graphs $G_1$ and $G_2$ with bounded game coloring number, the Cartesian product $G_1 \square G_2$ has bounded game chromatic number, answering a question of X. Zhu. We also obtain an upper bound on the game chromatic number of the strong product $G_1 \boxtimes G_2$ of two graphs.
\end{abstract}

\section{Introduction}
 The \emph{graph coloring game} is a game played on a finite graph $G$ with perfect information by two players, Alice and Bob. In the graph coloring game, Alice moves first, and Alice and Bob take turns coloring vertices of $G$. On each player's turn, the player chooses an uncolored vertex $v \in V(G)$ and colors $v$ using a color from a predetermined set $S$. Each player must color $G$ properly on each turn; that is, a player may not color a vertex $v$ with a color that appears in the neighborhood of $v$. Alice wins the game if each vertex of $G$ is properly colored, and Bob wins the game if every color of $S$ appears in the neighborhood of some uncolored vertex $v$, as this means that $v$ can never be properly colored. The \emph{game chromatic number} of $G$, written $\chi_g(G)$, is the minimum integer $k$ for which Alice has a winning strategy in the graph coloring game on $G$ when playing with a color set $S$ of $k$ colors. 

The game chromatic number was introduced by Bodlaender in \cite{Bodlaender} in 1990 and has received considerable attention since its invention. It is straightforward to show that for a graph $G$ of chromatic number $\chi(G)$ and maximum degree $\Delta(G)$, the following inequality holds:
$$\chi(G) \leq \chi_g(G) \leq \Delta(G) + 1.$$
The upper bound of $\Delta(G)+1$ is far from optimal in many cases, however. For instance, when $G$ is a forest, $\chi_g(G) \leq 4$ \cite{Faigle}, and when $G$ is planar, $\chi_g(G) \leq 17$ \cite{ZhuRefined}. Furthermore, $\chi_g(G)$ can be bounded above by other parameters of $G$. For instance, when $G$ has treewidth at most $w$, $\chi_g(G) \leq 3w + 2$ \cite{ZhuKtrees}, and when $G$ has genus at most $g$, $\chi_g(G) \leq \lfloor \frac{1}{2}(3 \sqrt{1 + 48g} + 23) \rfloor$ \cite{ZhuKtrees}. Furthermore, Dinski and Zhu \cite{DinskiZhu} show that $\chi_g(G)$ is bounded above by a function of the \emph{acyclic chromatic number} of $G$, written $\chi_a(G)$, which is the minimum number of colors needed to give $G$ a proper coloring in which every bicolored subgraph of $G$ is a forest. Dinski and Zhu give the following upper bound:
$$\chi_g(G) \leq \chi_a(G) ( \chi_a(G) + 1).$$

Similar to the graph coloring game, the \emph{graph marking game} is also a game played on a finite graph $G$ with perfect information by two players, Alice and Bob. In the graph marking game, first considered by Faigle et al.~ in \cite{Faigle}, the players take turns, with Alice moving first, and on a player's turn, the player chooses an unmarked vertex $v \in V(G)$ and marks $v$ with a black pen. The game ends when all vertices in $G$ have been marked. After a play of the graph marking game, each vertex $v \in V(G)$ receives a \emph{score} equal to the number of neighbors of $v$ that were already marked at the time that $v$ was marked. A play of the graph marking game on $G$ is then given a score equal to the maximum score over all vertices of $V(G)$, plus one. Alice's goal in the graph marking game is to minimize the score of the play, and Bob's goal is to maximize the score of the play. The \emph{game coloring number} of $G$, written $\col_g(G)$, is the minimum integer $t$ for which Alice has a strategy to limit the score of a play on $G$ to $t$. It is straightforward to show that $\chi_g(G) \leq \col_g(G)$. 

When attempting to find an upper bound for the game chromatic number of a graph $G$, it is often convenient to consider the graph marking game on $G$ and find an upper bound for $\col_g(G)$. The reason for this is that the game coloring number satisfies certain convenient properties that are not satisfied by the game chromatic number. For instance, when $H$ is a subgraph of $G$, Wu and Zhu show that $\col_g(H) \leq \col_g(G)$ \cite{WuLower}. On the other hand, Tuza and Zhu show that the ``cocktail party graph," obtained from the complete bipartite graph $K_{n,n}$ by deleting a perfect matching, has a game chromatic number of $n$, but the game chromatic number drops to $2$ if a single isolated vertex is added to the graph \cite{ZhuTuza}. Therefore, many upper bounds for the game chromatic number of certain graph classes, such as the bounds for planar graphs and graphs of bounded treewidth given above, are obtained by studying the graph marking game. 

We will consider the relationship between the game chromatic number of a graph $G$ and the properties of the bicolored subgraphs of $G$ with respect to some fixed proper coloring. In particular, we will show that for a graph $G$ with a proper coloring, the game chromatic number of $G$ is bounded above by a function of the number of colors used to color $G$ and the game coloring numbers of the bicolored subgraphs of $G$. Our method will generalize the method of Dinski and Zhu used to prove the inequality $\chi_g(G) \leq \chi_a(G)(\chi_a(G) + 1)$, as the method of Dinski and Zhu essentially just uses the fact that each bicolored subgraph of an acyclically colored graph $G$ is a forest, which must have a small game coloring number. One corollary of our method will be that the Cartesian product of two graphs of bounded game coloring number must have a bounded game chromatic number, which answers a question of Zhu from \cite{ZhuCartesian}.

The paper will be organized as follows. In Section \ref{secMarking}, we prove that a properly colored graph whose bicolored subgraphs have bounded game coloring number must have a bounded game chromatic number, and we list a number of corollaries. Then, in Section \ref{secProducts}, we apply the method of Section \ref{secMarking} to calculate upper bounds on the game chromatic numbers of certain graph products, namely the Cartesian product and the strong product of two graphs. Finally, in Section \ref{secCon}, we pose some questions.

\section{Bounding $\chi_g$ with the game coloring number of bicolored subgraphs}
\label{secMarking}
In this section, we will show that the game chromatic number of a properly colored graph $G$ may be bounded by a function of the number of colors used to color $G$ and the game coloring numbers of the bicolored subgraphs of $G$. Dinski and Zhu show in \cite{DinskiZhu} that for a graph $G$, $\chi_g(G) \leq \chi_a(G)(\chi_a(G) + 1)$, where $\chi_a(G)$ is the acyclic chromatic number of $G$. We will follow the ideas of Dinski and Zhu to prove a more general upper bound on $\chi_g(G)$ in terms of the game coloring numbers of the bicolored subgraphs of $G$ with respect to some proper coloring.

We consider a slight variation of the graph marking game, which we name the \emph{Bob marking game}. In the Bob marking game on a graph $G$, Alice and Bob play on $G$ by the same rules as those in the graph marking game, but Alice marks with a red pen, and Bob marks with a blue pen. In the Bob marking game, we let Bob move first. When a play of the game is finished, for each vertex $v \in V(G)$, we define the score of $v$ as the number of neighbors of $v$ marked in blue at the time $v$ was marked. In other words, only the neighbors of $v$ marked by Bob contribute to the score of $v$. Then, for a play of the Bob marking game on $G$, we say that the score of the play is equal to the maximum score over all vertices of $V(G)$, plus one. We say that the value $\Bob(G)$ is equal to the minimum integer $t$ for which Alice has a strategy to limit the score of a play of the Bob marking game on $G$ to $t$. Defining $\col_g^B(G)$ to be the lowest score achievable by Alice in the traditional marking game on $G$ with optimal play when Bob moves first, it is clear that $\Bob(G) \leq \col_g^B(G)$. Furthermore, Zhu remarks in \cite{ZhuCartesian} that $\col_g^B(G) \leq \col_g(G) + 1$, so it follows that $\Bob(G) \leq \col_g(G)+1$.

It is worth giving an example of a graph $G$ for which $\Bob(G) < \col_g^B(G)$ in order to show that these parameters are indeed different. It is shown in \cite{Bodlaender} that there exists forests $F$ for which $\chi_g(F) = \col_g^B(F) = 4$. In contrast, we will prove that $\Bob(F) \leq 3$ holds for every forest $F$ using the following strategy for Alice, which is used implicitly by Dinski and Zhu in \cite{DinskiZhu}. At a given state of the Bob marking game on $F$, let $F'$ denote the subgraph of $F$ that is obtained by removing from $F$ the vertices marked in red by Alice, as well as the edges whose endpoints are both marked in blue by Bob. We will show that at the end of each of Alice's turns, she can ensure that at most one vertex from each component of $F'$ is marked in blue by Bob. Alice can certainly ensure that this condition holds at the end of her first turn. Now, suppose that the condition holds at the end of Alice's $i$th turn. On Bob's $(i+1)$th turn, Bob chooses a component $K$ of $F'$ and marks a vertex $v \in K$ blue. If $v$ is the only blue vertex in $K$, then Alice marks an arbitrary vertex, and the condition is satisfied again at the end of Alice's $(i+1)$th turn. Otherwise, there exists a single other blue vertex $w \in K$. If $w$ is a neighbor of $v$, then the edge $vw$ is removed from $F'$. Then, Alice marks an arbitrary vertex, and the condition holds again at the end of Alice's $(i+1)$th turn. On the other hand, if $w$ is not a neighbor of $v$, then there exists a unique path $P$ in $K$ connecting $v$ and $w$ with at least one internal vertex. Alice marks an internal vertex $u$ of $P$, and then since $u$ is removed from $F'$, the condition again holds at the end of Alice's $(i+1)$th turn. Now, if $\Bob(F) \geq 4$, then at some point in the game, an unmarked vertex $u$ must have three blue neighbors, and $u$ along with these three blue neighbors belong to a single component of $F'$. However, Alice's strategy ensures that at any point in the game, a component of $F'$ contains at most two blue vertices, giving us a contradiction. Therefore, $\Bob(F) \leq 3$.

Now, consider a graph $G$ for which $\Bob(G) \leq t$. In a play of the Bob marking game on $G$, Alice has a strategy in which every vertex $v \in V(G)$ is marked before the number of blue marked neighbors of $v$ exceeds $t-1$. We say that Alice's strategy on $G$ with respect to the bound $\Bob(G) \leq t$ is \emph{reactive} if for each vertex $v$, if $v$ ever has $t-1$ blue marked neighbors after Bob's move, then Alice marks $v$ immediately. 
For example, the strategy for forests $F$ described above is reactive with respect to the bound $\Bob(F) \leq 3$, because if Bob ever marks two neighbors of an unmarked vertex $u$, then Alice will immediately mark $u$. 
If Alice plays a strategy on $G$ to limit the score of each vertex to $t-1$, then the only way that Alice's strategy would not be reactive would be if Alice were to allow a vertex to remain unmarked when all of its neighbors were marked, with exactly $t-1$ neighbors marked in blue. Indeed, if an unmarked vertex $v$ has $t-1$ blue marked neighbors and at least one unmarked neighbor on Bob's turn, then Bob can achieve a score of $t+1$ on $G$ by marking an additional neighbor of $v$, so in any successful strategy, Alice would need to mark $v$ to prevent its score from increasing. Most strategies that we consider for a graph $G$ that give a bound of the form $\Bob(G) \leq t$ will be reactive, as it is not usually convenient to try to ensure that all neighbors of an unmarked vertex $v$ are marked, and it is usually easier for Alice just to mark a vertex $v$ in order to prevent its score from increasing.

The following theorem generalizes the method of Dinski and Zhu originally used to prove that for any graph $G$, $\chi_g(G) \leq \chi_a(G)(\chi_a(G) + 1)$ \cite{DinskiZhu}. The method of Dinski and Zhu considers an acyclically colored graph $G$, and using the acyclical coloring of $G$, these authors devise a winning strategy for Alice in the graph coloring game on $G$. Using the strategy above, Dinski and Zhu implicitly show that $\Bob(F) \leq 3$ holds for every forest $F$, and they essentially use the fact that every bicolored subgraph $H$ of $G$ satisfies $\Bob(H) \leq 3$ to devise their strategy. The following theorem, however, shows that in order to bound the game chromatic number of a properly colored graph $G$, it is enough simply to ensure that $\Bob(H)$ is bounded for every bicolored subgraph of $G$. We use the term \emph{$k$-coloring} to refer to a proper graph coloring using $k$ colors.

\begin{theorem}
\label{thmMark}
Let $G$ be a graph with a $k$-coloring $\phi$, and suppose that every bicolored subgraph $H$ of $G$ with respect to $\phi$ satisfies $\Bob(H) \leq t$. If Alice has a reactive strategy with respect to each graph $H$ and the bound $\Bob(H) \leq t$, then
$$\chi_g(G) \leq k((k-1)(t-2) + 2).$$
\end{theorem}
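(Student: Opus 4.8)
The plan is to have Alice play the graph coloring game on $G$ by simulating $\binom{k}{2}$ copies of the Bob marking game, one for each pair of color classes of $\phi$. The intuition is that whenever Bob colors a vertex $v$ with a color $\phi(v) = i$, this move ``attacks'' the color classes $j \neq i$ at $v$, so we should think of this as a Bob-move in each of the $k-1$ bicolored subgraphs $H_{ij}$ of $G$. Alice, in turn, will respond in (at most) one of these games with her reactive strategy, and translating that marking-game move back to the coloring game will tell Alice which vertex to color. To make this work I would set up an invariant that tracks, for each bicolored subgraph $H_{ij}$, a correspondence between the colored vertices of $G$ and the marked vertices of $H_{ij}$: a vertex $v$ with $\phi(v)=i$ colored by Bob corresponds to a blue mark in every $H_{ij}$, a vertex colored by Alice corresponds to a red mark in the one game Alice ``charges'' the move to and is treated as unmarked (or as an already-safe vertex) in the others.

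The key steps, in order, would be: (1) Formalize the simulation and the bookkeeping, so that every move in the coloring game on $G$ induces a legal sequence of moves in the relevant Bob marking games, and conversely Alice's reactive responses in those games are always available (here the reactivity hypothesis is essential — Alice must be able to ``mark $v$ immediately'' when $v$ accumulates $t-1$ blue neighbors in some $H_{ij}$, and she can only afford one real coloring move per turn, so I need the reactive condition to guarantee that at most one bicolored subgraph demands attention after each Bob move). (2) Bound the number of neighbors of any vertex $v$ in the same color class as $v$ that could interfere: these come through the $k-1$ bicolored subgraphs $H_{\phi(v),j}$, and in each such $H_{ij}$ the reactive strategy caps the number of blue-marked (i.e., Bob-colored) $H_{ij}$-neighbors of $v$ at $t-1$ before $v$ is colored; I also need to account for the at most one additional neighbor in each $H_{ij}$ coming from Alice's own moves, and for the one move ``in between'' that Bob gets. (3) Conclude that when Alice is finally forced (or chooses) to color $v$, the number of distinct colors appearing on neighbors of $v$ is at most $(k-1)$ times roughly $(t-2)$ Bob-colored neighbors per bicolored subgraph, plus a bounded additive term per class, yielding the count $(k-1)(t-2)+2$ forbidden ``interference slots'' per color class; multiplying by the $k$ color classes and adding the colors used by $\phi$ itself gives the stated bound $k\bigl((k-1)(t-2)+2\bigr)$.

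The main obstacle I anticipate is step (1): making the reduction airtight when Alice is running many Bob marking games in parallel but is only allowed one move per turn. In a single Bob marking game, a reactive strategy lets Alice respond to each Bob move with one move, but here a single Bob move in $G$ counts as a Bob move in $k-1$ different games simultaneously, so naively Alice would owe $k-1$ responses. The resolution has to be that a reactive strategy only requires a response when some vertex hits the critical threshold $t-1$, and I will need to argue that a single Bob move can push at most one vertex over that threshold in at most one of the bicolored subgraphs — or, more carefully, arrange the charging/counting so that the ``debt'' Alice accumulates across games stays bounded and is exactly what inflates the per-class cost from $t-1$ to $(k-1)(t-2)+2$. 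Getting the constants to come out to precisely $(k-1)(t-2)+2$, rather than something slightly larger, will require being careful about which of Bob's and Alice's moves are already accounted for when a threshold is reached, and about the fact that a vertex colored by Alice never contributes a ``new'' color beyond what was already threatened. Once the simulation and the invariant are pinned down, the final color-counting and the translation into a winning strategy for the coloring game should be routine.
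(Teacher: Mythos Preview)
Your high-level architecture---simulate $\binom{k}{2}$ Bob marking games on the bicolored subgraphs and let Alice's marking responses dictate her coloring moves---is correct, but the mapping you set up from coloring-game moves to marking-game moves is the wrong one, and this is exactly why you hit the ``Alice owes $k-1$ responses'' obstacle you flag in step~(1).

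The idea you are missing is that the $k((k-1)(t-2)+2)$ colors used in the coloring game should be partitioned \emph{in advance} into $k$ palettes of $(k-1)(t-2)+2$ shades each, one palette per $\phi$-color, and Alice commits to always coloring a vertex $v$ with a shade from palette $\phi(v)$. The point of this is that a single Bob move now determines not $k-1$ marking games but at most \emph{one}: if Bob colors a vertex $v$ with $\phi(v)=d$ using a shade from palette $c\neq d$, that is a blue mark at $v$ in the single game on $G_{c,d}$, and Alice responds once, in that one game, via her strategy $S_{c,d}$. If Bob happens to use a shade from palette $d$, the move is harmless and Alice idles. So the parallel-debt problem you anticipated never arises; there is nothing to charge or amortize.

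With this setup the counting in your step~(3) also becomes clean and yields the exact constant. When Alice goes to color $v$, the only forbidden shades in palette $\phi(v)$ are those Bob placed on neighbors of $v$ (Alice never does this, since she always uses the correct palette and $\phi$ is proper). For each color $c\neq\phi(v)$ these are precisely the blue $G_{\phi(v),c}$-neighbors of $v$, hence at most $t-1$ of them per $c$. Reactivity then does exactly one job: if any game $G_{\phi(v),c}$ ever reached $t-1$ blue neighbors of $v$, Alice would have colored $v$ immediately in response; so at the moment $v$ is being colored, only the single color $c^*$ corresponding to Bob's last move can attain $t-1$, and the other $k-2$ colors are stuck at $t-2$. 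Total forbidden shades: $(k-2)(t-2)+(t-1)=(k-1)(t-2)+1$, one fewer than the palette size. Your remark about accounting for ``Alice's own moves'' in the neighbor count is unnecessary---Alice's moves never place a shade of $\phi(v)$ on a neighbor of $v$.
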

\begin{proof}
Let $\phi$ be a proper coloring of $G$ using $k$ colors that satisfies the assumptions of the theorem. In order to show that $\chi_g(G) \leq  k((k-1)(t-2) + 2)$, we must show that Alice has a winning strategy in the graph coloring game using $k((k-1)(t-2)+2)$ colors. We will define a set $C$ of $k((k-1)(t-2)+2)$ values with which Alice and Bob will play the graph coloring game, and to avoid confusion, we will refer to the values in $C$ as \emph{shades}, rather than colors. That is, on each turn, we will let Alice or Bob assign a shade from $C$ to a vertex of $G$ that has not already been assigned a shade. On the other hand, we will refer to the $k$ values in the image of $\phi$ as \emph{colors}. We will partition $C$ into $k$ parts of size $(k-1)(t-2)+2$, and we will say that for each color $c$ used by $\phi$, $C$ contains $(k-1)(t-2)+2$ shades of $c$.

For two colors $c$ and $d$, let $G_{c,d} \subseteq G$ be the subgraph of $G$ induced by the vertices of $V(G)$ that are colored with $c$ and $d$ by $\phi$. Let $S_{c,d}$ be a reactive strategy of the marking game on $G_{c,d}$ by which Alice can limit the score of any vertex of $G_{c,d}$ to $t-1$ in the Bob marking game. We will describe Alice's strategy for the coloring game on $G$. In Alice's strategy, Alice will always color some vertex $v \in V(G)$ with a shade of $\phi(v)$. We will sometimes allow Alice to choose an arbitrary vertex $v$ to assign a shade of $\phi(v)$, and in this case, we say that Alice plays an \emph{idle move}. 

As Alice plays the game, Alice will in fact consider $k \choose 2$ different Bob marking games played on the graphs $G_{c,d}$, one for each color pair $c,d \in \phi(V(G))$. Each time Bob makes a move, Alice will consider Bob's move to be a move in a Bob marking game on one of the subgraphs $G_{c,d}$. Alice will calculate a response to Bob's move in the Bob marking game on $G_{c,d}$ using the strategy $S_{c,d}$, and based on Alice's response in the Bob marking game on $G_{c,d}$, Alice will respond to Bob's move in the coloring game on $G$.

Alice's strategy is as follows. Alice begins the game with an idle move. On each of Bob's turns, if Bob chooses a vertex $v \in V(G)$ and colors $v$ with a shade of $\phi(v)$, then Alice responds by playing an idle move. If Bob colors a vertex $v$ with a shade $c$ that is not one of the shades of $\phi(v)$, then Alice considers Bob's move as if it were a move in the Bob marking game on $G_{c,\phi(v)}$. Alice then uses $S_{c,\phi(v)}$ to choose a vertex $w \in V(G)$ to mark in response to Bob's move in the Bob marking game on $G_{c,\phi(v)}$. Then, in the coloring game on $G$, Alice colors $w$ with any available shade of $\phi(w)$. If $w$ has already been colored, then Alice plays an idle move. Alice repeats this process for each of Bob's moves.

We now show that Alice's strategy always succeeds in producing a proper coloring of $G$. Suppose that on some turn, Alice attempts to color a vertex $v$ with a shade of $\phi(v)$. For any neighbor $w$ of $v$ that is colored with a shade of $\phi(v)$, $w$ must have been colored by Bob. Equivalently, $w$ must have been marked by Bob in the Bob marking game on $G_{\phi(v),\phi(w)}$. However, Alice has used the strategy $S_{\phi(v), \phi(w)}$ to ensure that Bob does not mark more than $t-1$ neighbors of an unmarked vertex in the Bob marking game on $G_{\phi(v), \phi(w)}$. Therefore, for each color $c \in \phi(V(G))$ that appears in the neighborhood of $v$, at most $t-1$ vertices $w \in N(v)$ with $\phi(w) = c$ have been colored by Bob with a shade of $\phi(v)$. Furthermore, as the strategy $S_{\phi(v),\phi(w)}$ is reactive, there exists at most one color $c^*$ for which $t-1$ vertices $w \in N(v)$ with $\phi(w) = c^*$ have been colored with a shade of $\phi(v)$, and this color $c^*$ must satisfy $c^* = \phi(w^*)$, where $w^* \in N(v)$ is the vertex that has just been colored by Bob with a shade of $\phi(v)$ on the last move. For all other colors $c \in \phi(V(G))$, at most $t-2$ neighbors $w \in N(v)$ with $\phi(w) = c$ have been colored by Bob with a shade of $\phi(v)$. This implies that the total number of shades of $\phi(v)$ that appear in the neighborhood of $v$ is at most $(t-2)(k-1)+1$. As Alice has $(t-2)(k-1)+2$ shades of $\phi(v)$ to use, Alice thus has an available shade of $\phi(v)$ to use at $v$. Hence, Alice's strategy succeeds at every move.

As Alice always succeeds in coloring a vertex of $G$ with a shade from $C$ on her turn, the only way that $G$ would not be properly colored would be if Bob were unable to color any vertex of $G$ on some turn, in which case the coloring game would end prematurely with Alice losing. However, Bob may always ``pretend to be Alice" and successfully color a vertex of $V(G)$ with an idle move using the previous argument. Therefore, Bob also always has a legal move, and hence $G$ is properly colored.
\end{proof}
We make several observations about Theorem \ref{thmMark} and its proof. First, we have defined the graph coloring game with Alice moving first, but it is easy to see that the strategy of Theorem \ref{thmMark} works regardless of which player moves first. Second, the upper bound on $\chi_g(G)$ from Theorem \ref{thmMark} also holds for any subgraph of $G$, as removing edges from $G$ does not make the strategy any more difficult for Alice, and if a vertex of $G$ that Alice wishes to color is not present in some subgraph, then Alice may simply play an idle move. Third, while the Bob marking game is not a standard part of the literature, the inequality $\Bob(H) \leq \col^B_g(H) \leq \col_g(H)+1$ implies that we can replace the condition $\Bob(H) \leq t$ of Theorem \ref{thmMark} with a bound using more standard parameters. Finally, if $\Bob(H) \leq t$ holds for every bicolored subgraph $H$ of $G$, but Alice does not necessarily have a reactive strategy with respect to these bounds, then a very similar argument gives the following upper bound, which is only slightly worse than the bound in Theorem \ref{thmMark}.
\begin{corollary}
\label{corNonreactive}
Let $G$ be a graph with a $k$-coloring $\phi$, and suppose that every two-colored subgraph $H$ of $G$ with respect to $\phi$ satisfies $\Bob(H) \leq t$. Then
$$\chi_g(G) \leq k((k-1)(t-1) + 1).$$
\end{corollary}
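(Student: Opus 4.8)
The plan is to reuse the proof of Theorem \ref{thmMark} almost verbatim, dropping only the one step that invokes reactivity. As before, Alice plays the coloring game on $G$ with a palette $C$ of shades partitioned into $k$ color classes, one class of size $(k-1)(t-1)+1$ for each color of $\phi$; she always colors a vertex $v$ with a shade of $\phi(v)$; and she maintains $\binom{k}{2}$ simultaneous Bob marking games on the bicolored subgraphs $G_{c,d}$, this time using for each pair $c,d$ a strategy $S_{c,d}$ that witnesses $\Bob(G_{c,d}) \le t$ but is not assumed to be reactive. Bob coloring a vertex $v$ with a shade of a color $c \neq \phi(v)$ is read as Bob marking $v$ blue in $G_{c,\phi(v)}$; Alice responds with the vertex $w$ that $S_{c,\phi(v)}$ prescribes and colors $w$ with an available shade of $\phi(w)$, playing an idle move if $w$ is already colored or if Bob's move was a harmless shade of $\phi(v)$, exactly as in Theorem \ref{thmMark}.

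The only place the argument changes is the counting step verifying that Alice always has a legal shade when she colors a vertex $v$. Any neighbor $w$ of $v$ blocking a shade of $\phi(v)$ at $v$ must have been colored by Bob with a shade of $\phi(v)$, i.e.\ marked blue by Bob in $G_{\phi(v),\phi(w)}$; since $S_{\phi(v),\phi(w)}$ keeps the score of every vertex of $G_{\phi(v),\phi(w)}$ below $t$, at the moment $v$ is colored it has at most $t-1$ such blue neighbors of each color $\phi(w) \neq \phi(v)$. Without reactivity we can no longer isolate a single ``threshold'' color contributing $t-1$ with all others contributing at most $t-2$; we just bound the contribution of each of the $k-1$ colors different from $\phi(v)$ by $t-1$, so at most $(k-1)(t-1)$ shades of $\phi(v)$ appear in $N(v)$. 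Thus $(k-1)(t-1)+1$ shades of each color always leave Alice a free shade, giving $|C| = k((k-1)(t-1)+1)$. The remaining points carry over unchanged: Bob always has a legal move since he may imitate an idle move of Alice, and the strategy does not depend on who moves first.

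I do not expect a real obstacle here, since the corollary is strictly weaker than Theorem \ref{thmMark} in that it simply discards one optimization. The only thing worth checking carefully is that the hypothesis $\Bob(G_{c,d}) \le t$, read without reactivity, still guarantees that every vertex of $G_{c,d}$ is colored by the time it has accumulated $t-1$ Bob-colored neighbors in $G_{c,d}$ — which is precisely the meaning of a per-vertex score bound of $t-1$ — so the per-color bound of $t-1$ used in the counting step is sound, and nothing breaks from being unable to sharpen it to $t-2$ for all but one color.
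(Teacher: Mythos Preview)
Your proposal is correct and matches the paper's approach exactly: the paper does not spell out a separate proof for Corollary~\ref{corNonreactive} but simply states that ``a very similar argument'' to Theorem~\ref{thmMark} yields the slightly weaker bound when reactivity is dropped, which is precisely what you have written out. Your identification of the single changed step---bounding each of the $k-1$ color classes by $t-1$ rather than isolating one at $t-1$ and the rest at $t-2$---is the intended modification.
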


We note that the strategy of Zhu in \cite{ZhuCartesian} used to bound the game chromatic number of graph Cartesian products bears some resemblance to the strategy of Theorem \ref{thmMark}, as in \cite{ZhuCartesian}, Zhu explicitly devises a single graph coloring strategy by combining many graph marking strategies on smaller subgraphs. However, the strategy of Zhu in \cite{ZhuCartesian} still relies on acyclic colorings, so the strategy of Theorem \ref{thmMark} is the first strategy, to the best of our knowledge, that uses more general bicolored subgraphs. 

Theorem \ref{thmMark} has a number of corollaries. First, the upper bound of Dinski and Zhu from \cite{DinskiZhu} follows immediately.
\begin{corollary}
\label{corAcy}
For every graph $G$, $\chi_g(G) \leq \chi_a(G)(\chi_a(G) +1)$.
\end{corollary}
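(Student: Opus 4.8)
The plan is to instantiate Theorem~\ref{thmMark} with an optimal acyclic coloring and with the parameter $t = 3$. First I would set $k = \chi_a(G)$ and fix an acyclic $k$-coloring $\phi$ of $G$; by the definition of the acyclic chromatic number, every bicolored subgraph $H$ of $G$ with respect to $\phi$ is a forest. So to apply Theorem~\ref{thmMark} it suffices to know that forests behave well in the Bob marking game.

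Next I would invoke the fact, established in the discussion preceding Theorem~\ref{thmMark}, that $\Bob(F) \leq 3$ for every forest $F$, witnessed by the explicit strategy in which Alice keeps at most one blue vertex in each component of the auxiliary forest $F'$. The key point I need from that discussion is not merely the inequality $\Bob(F) \leq 3$ but that the strategy realizing it is \emph{reactive} with respect to this bound: whenever an unmarked vertex acquires two blue neighbors, Alice immediately marks it. This was already noted when reactive strategies were introduced, so I would just cite it.

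With these two observations in hand, the hypotheses of Theorem~\ref{thmMark} are satisfied for the coloring $\phi$ with $t = 3$, and the theorem yields
$$\chi_g(G) \leq k\bigl((k-1)(t-2) + 2\bigr) = k\bigl((k-1)\cdot 1 + 2\bigr) = k(k+1).$$
Substituting $k = \chi_a(G)$ gives $\chi_g(G) \leq \chi_a(G)(\chi_a(G)+1)$, as claimed.

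There is essentially no obstacle here: all the substantive work lives in Theorem~\ref{thmMark} and in the earlier verification that forests admit a reactive Alice strategy with score bound $3$. The only thing to be careful about is the bookkeeping with $t$: one must use the bound $\Bob(F) \leq 3$ (rather than the weaker $\Bob(F) \leq \col_g(F)+1$, which could be as large as $5$ for forests) in order to recover exactly the Dinski--Zhu bound $\chi_a(G)(\chi_a(G)+1)$ rather than something larger.
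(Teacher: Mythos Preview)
Your proposal is correct and follows essentially the same approach as the paper: fix an optimal acyclic coloring so that every bicolored subgraph is a forest, invoke the earlier observation that $\Bob(F)\leq 3$ via a reactive strategy, and plug $k=\chi_a(G)$ and $t=3$ into Theorem~\ref{thmMark}. The paper's proof is just a terser version of exactly this argument.
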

\begin{proof}
We have shown previously that $\Bob(F) \leq 3$ holds for every forest $F$, and furthermore, that Alice has a strategy that is reactive with respect to this bound. In an acyclic coloring on $G$, every bicolored subgraph on $G$ is a forest, so letting $k = \chi_a(G)$ and $t = 3$ in Theorem \ref{thmMark} yields the result.
\end{proof}
Additionally, a number of similar upper bounds follow.
\begin{corollary}
Let $G$ be a graph with a proper $k$-coloring in which every bicolored subgraph has treewidth at most $w$. Then $\chi_g(G) \leq k(3w(k-1)+2)$. 
\end{corollary}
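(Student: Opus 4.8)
The plan is to deduce this from Theorem~\ref{thmMark}. Since $k\big((k-1)(t-2)+2\big)$ equals $k\big(3w(k-1)+2\big)$ precisely when $t=3w+2$, it suffices to establish the following: every graph $H$ of treewidth at most $w$ satisfies $\Bob(H)\le 3w+2$, and Alice has a strategy witnessing this bound which is reactive with respect to it. Granting this, we apply Theorem~\ref{thmMark} to the given $k$-coloring $\phi$: each bicolored subgraph $G_{c,d}$ has treewidth at most $w$, so $\Bob(G_{c,d})\le 3w+2$ via a reactive strategy, and the theorem gives $\chi_g(G)\le k\big((k-1)(3w+2-2)+2\big)=k\big(3w(k-1)+2\big)$.

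To establish $\Bob(H)\le 3w+2$ for treewidth-$\le w$ graphs $H$, I would take the strategy underlying Zhu's bound $\col_g(H)\le 3w+2$~\cite{ZhuKtrees}, built from a width-$w$ tree decomposition of $H$, and run it in the Bob marking game. The crucial point is that this transfer should cost Alice nothing: although Bob now moves first, in the Bob marking game only Bob's blue marks are scored, so the ``head start'' that Alice would otherwise exploit is irrelevant, and the usual penalty for letting Bob move first (the $+1$ in $\col_g^B(H)\le\col_g(H)+1$) does not arise — Alice can still keep the number of blue neighbours of every unmarked vertex at most $3w+1$. For reactivity, recall the observation made just before Theorem~\ref{thmMark}: any Alice strategy that bounds all scores by $3w+1$ is forced to mark an unmarked vertex $v$ as soon as $v$ acquires $3w+1$ blue neighbours while still having an unmarked neighbour, since otherwise Bob would push $v$'s score to $3w+2$; such a strategy is therefore automatically reactive except in the harmless case where all of $v$'s neighbours are already marked, in which case Alice may simply mark $v$ as well. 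Hence $\Bob(H)\le 3w+2$ with a reactive strategy, as required.

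The main obstacle is exactly this transfer: one must verify that running the bounded-treewidth marking strategy with Bob moving first, but counting only blue marks, still keeps every unmarked vertex's blue-degree below $3w+2$ — that is, that the disadvantage of moving second is precisely offset by the fact that Alice's red marks are never scored, so that the bound stays at $3w+2$ rather than slipping to $3w+3$. Once this is checked, the reactivity condition and the arithmetic in Theorem~\ref{thmMark} are routine.
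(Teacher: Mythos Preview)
Your overall plan---set $t=3w+2$ and invoke Theorem~\ref{thmMark}---is exactly what the paper does. The divergence is in how you justify $\Bob(H)\le 3w+2$ for treewidth-$w$ graphs. The paper does not attempt your ``transfer'' argument at all: it simply cites that Zhu's strategy in \cite{ZhuKtrees} already yields $\col_g^B(H)\le 3w+2$, i.e.\ the \emph{same} bound holds when Bob moves first in the ordinary marking game, and then uses $\Bob(H)\le\col_g^B(H)$. It also asserts (by inspection of Zhu's strategy) that this strategy is reactive. So the ``main obstacle'' you identify is bypassed entirely by citing the Bob-first version of Zhu's bound rather than the Alice-first version.

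Your heuristic that the $+1$ penalty for Bob moving first is exactly cancelled by scoring only blue marks is plausible but, as you yourself flag, is not a proof: one cannot conclude from $\col_g(H)\le 3w+2$ alone that $\Bob(H)\le 3w+2$ without looking inside the strategy, since a generic Alice-first strategy may genuinely rely on her opening move. The clean fix is to do what the paper does and appeal to the fact that Zhu's particular activation-strategy analysis is insensitive to who moves first, giving $\col_g^B(H)\le 3w+2$ directly. Your reactivity discussion is essentially the observation the paper makes just before Theorem~\ref{thmMark}, and the patch you propose (have Alice immediately mark a vertex with $t-1$ blue neighbours even when all its neighbours are marked) is harmless, though the paper instead just asserts that Zhu's specific strategy is already reactive.
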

\begin{proof}
Zhu shows in \cite{ZhuKtrees} that for a graph $H$ of treewidth at most $w$, $\col^B_g(H) \leq 3w + 2$, and furthermore, the strategy for Alice that Zhu gives is reactive with respect to this bound. Therefore, letting $t = 3w + 2$ in Theorem \ref{thmMark} yields the result.
\end{proof}
\begin{corollary}
Let $G$ be a graph with a proper $k$-coloring in which every bicolored subgraph is planar. Then $\chi_g(G) \leq k(15k - 13)$. 
\end{corollary}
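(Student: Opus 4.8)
The plan is to invoke Theorem \ref{thmMark} in exactly the same way as in the two preceding corollaries, this time with $t = 17$. What is needed is an input bound of the form $\Bob(H) \le 17$, together with a reactive strategy for Alice, valid for every planar graph $H$; since every bicolored subgraph $H = G_{c,d}$ of $G$ with respect to the given $k$-coloring $\phi$ is planar by hypothesis, feeding $t = 17$ into Theorem \ref{thmMark} then yields
$$\chi_g(G) \le k\bigl((k-1)(17-2)+2\bigr) = k\bigl(15(k-1)+2\bigr) = k(15k-13),$$
which is the claimed bound.

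So the real content is the auxiliary fact that every planar graph $H$ satisfies $\Bob(H) \le 17$ with a reactive witnessing strategy for Alice. I would obtain this from Zhu's refined activation strategy for the marking game on planar graphs \cite{ZhuRefined}: it shows that Alice can keep the score of the ordinary marking game on a planar graph down to $17$ even when Bob moves first, i.e.\ $\col_g^B(H) \le 17$, and — just like the bounded-treewidth strategy used in the previous corollary — the strategy can be taken to be reactive (if a vertex ever reaches $16$ marked neighbors after one of Bob's turns, Alice marks it on her next turn). Since $\Bob(H) \le \col_g^B(H)$, this gives $\Bob(H) \le 17$ with a reactive strategy, and the corollary follows.

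The step I expect to require the most care is justifying that $17$, rather than $18$, is the correct value of $t$. The generic estimate $\Bob(H) \le \col_g^B(H) \le \col_g(H) + 1$ only gives $\Bob(H) \le 18$ from the bound $\col_g(\text{planar}) \le 17$, and plugging $t = 18$ into Theorem \ref{thmMark} yields the weaker estimate $\chi_g(G) \le k(16k-14)$. One must therefore use that the activation strategy achieves score $17$ on a planar graph already in the Bob-first game, without the extra $+1$ — exactly parallel to graphs of treewidth at most $w$, where Zhu's strategy gives $\col_g^B \le 3w+2$ rather than $3w+3$. Confirming that this stronger statement is what the activation argument delivers, and that the strategy stays reactive after the harmless modification that forces Alice to mark any vertex reaching $16$ blue neighbors, is the only nontrivial point; the arithmetic above is then immediate.
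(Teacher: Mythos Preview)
Your proposal is correct and matches the paper's proof essentially line for line: cite Zhu's activation strategy from \cite{ZhuRefined} to get $\col_g^B(H)\le 17$ (hence $\Bob(H)\le 17$) for every planar $H$ with a reactive strategy, then apply Theorem~\ref{thmMark} with $t=17$. Your extra paragraph explaining why $t=17$ rather than $t=18$ is the right input is exactly the point the paper leaves implicit by citing $\col_g^B$ directly instead of passing through $\col_g(H)+1$.
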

\begin{proof}
Zhu shows in \cite{ZhuRefined} that for a planar graph $H$, $\col^B_g(H) \leq 17$, and furthermore, the strategy for Alice that Zhu gives is reactive with respect to this bound. Therefore, letting $t = 17$ in Theorem \ref{thmMark} yields the result.
\end{proof}
\begin{corollary}
Let $G$ be a graph with a proper $k$-coloring in which every bicolored subgraph is of genus at most $g$. Then $\chi_g(G) \leq k((k-1)\lfloor \frac{1}{2}(3 + \sqrt{1 + 48g} + 19) \rfloor+2)$. 
\end{corollary}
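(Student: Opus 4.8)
The plan is to follow the template set by the three preceding corollaries. First I would quote the known bound on the game coloring number of graphs embeddable in a surface of genus $g$: the same source \cite{ZhuKtrees} that is cited in the introduction for the $\chi_g$ bound shows that every graph $H$ of genus at most $g$ (with $g \geq 1$; for $g = 0$ one would simply invoke the planar corollary instead) satisfies $\col_g^B(H) \leq \lfloor \tfrac{1}{2}(3\sqrt{1+48g}+23) \rfloor$ — the Bob-first version following from the Alice-first game coloring number at the cost of at most one extra unit via $\col_g^B(H) \leq \col_g(H)+1$, which can be absorbed into the bound if the cited statement is phrased with Alice moving first.

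Next I would check, exactly as in the planar and bounded-treewidth cases, that the marking strategy Zhu supplies for genus-$g$ graphs is \emph{reactive} in the sense of Section \ref{secMarking}: that strategy marks a vertex as soon as Bob has marked sufficiently many of its neighbours, so it reacts to Bob's threats in precisely the required way. Granting this, I set $k$ equal to the number of colours of the given $k$-coloring $\phi$ and $t = \lfloor \tfrac{1}{2}(3\sqrt{1+48g}+23) \rfloor$ in Theorem \ref{thmMark}, which immediately yields $\chi_g(G) \leq k((k-1)(t-2)+2)$.

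It then remains only to simplify the floor. Since $2$ is an integer, $t - 2 = \lfloor \tfrac{1}{2}(3\sqrt{1+48g}+23) \rfloor - 2 = \lfloor \tfrac{1}{2}(3\sqrt{1+48g}+23) - 2 \rfloor = \lfloor \tfrac{1}{2}(3\sqrt{1+48g}+19) \rfloor$, so that $k((k-1)(t-2)+2)$ is exactly the expression in the statement. The entire argument is a routine instantiation of Theorem \ref{thmMark}; the one point that genuinely requires care is confirming from the cited proof that the surface marking strategy is reactive (and keeping track of any off-by-one between the Alice-first and Bob-first marking games on genus-$g$ graphs), after which the floor manipulation is automatic.
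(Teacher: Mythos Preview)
Your proposal is correct and follows exactly the paper's own argument: cite Zhu's bound $\col_g^B(H) \leq \lfloor \tfrac{1}{2}(3\sqrt{1+48g}+23)\rfloor$ for graphs of genus at most $g$, note that the underlying marking strategy is reactive, and plug this value of $t$ into Theorem~\ref{thmMark}. The only addition you make beyond the paper is spelling out the floor arithmetic $t-2 = \lfloor \tfrac{1}{2}(3\sqrt{1+48g}+19)\rfloor$, which the paper leaves implicit.
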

\begin{proof}
Zhu shows in \cite{ZhuKtrees} that for a graph $H$ of genus at most $g$, $\col^B_g(H) \leq \lfloor \frac{1}{2}(3 + \sqrt{1 + 48g} + 23) \rfloor$, and furthermore, the strategy for Alice that Zhu gives is reactive with respect to this bound. Therefore, letting $t = \lfloor \frac{1}{2}(3 + \sqrt{1 + 48g} + 23) \rfloor$ in Theorem \ref{thmMark} yields the result.
\end{proof}

It is natural to ask whether these upper bounds for the game chromatic number of a graph obtained using the method of Theorem \ref{thmMark} are optimal. Giving an overall answer to this question is difficult, as graph colorings in which bicolored subgraphs have bounded game coloring number have not yet received any attention. It is known, however, that Corollary \ref{corAcy} often does not give a tight upper bound. For instance, using the fact that a planar graph has an acyclic chromatic number of at most $5$ \cite{Borodin}, Corollary \ref{corAcy} implies that a planar graph has a game chromatic number of at most $30$, a result shown in \cite{DinskiZhu}, but as stated, a different method of Zhu shows that the game chromatic number of a planar graph is in fact at most $17$ \cite{ZhuRefined}.

\section{The Cartesian product and strong product of graphs}
\label{secProducts}
In this section, we will show that Theorem \ref{thmMark} may be used to calculate an upper bound on certain graph products, namely the Cartesian product of two graphs and the strong product of two graphs. We first consider the Cartesian products of two graphs, which we define as follows. Given two graphs $G_1$ and $G_2$, the \emph{Cartesian product} of $G_1$ and $G_2$, written $G_1 \square G_2$, is defined as the graph on the vertex set $V(G_1) \times V(G_2)$ in which two vertices $(u,v$) and $(u',v')$ are adjacent if and only if either $u = u'$ and $v\sim v'$ in $G_2$, or $v = v'$ and $u\sim u'$ in $G_1$, where $\sim$ represents adjacency. An example of the Cartesian product of two graphs is shown in Figure \ref{figCartEx}. In \cite{ZhuCartesian}, Zhu calculates an upper bound on the game chromatic number of the Cartesian product $G_1 \square G_2$ of two graphs $G_1$ and $G_2$, but Zhu's upper bound relies on the acyclic chromatic number of one of the graphs and the game coloring number of a modified form of the other graph. Using Theorem \ref{thmMark}, however, we may show that $\chi_g(G_1 \square G_2)$ may be bounded above only by $\col_g(G_1)$ and $\col_g(G_2)$. Recall that for a graph $G$, we define $\col_g^B(G)$ to be the lowest score achievable by Alice in the graph marking game on $G$ with optimal play when Bob moves first.

\begin{figure}
\begin{center}
\begin{tikzpicture}
[scale=2,auto=left,every node/.style={circle,fill=gray!30},minimum size = 6pt,inner sep=1pt]
\draw (0,0)--(2,0);
\draw (0,0)--(0,1);

\node (p1) at (0.25,-0.25) [draw = black]  {};
\node (p2) at (1,-0.25) [draw = black]  {};
\node (p3) at (1.75,-0.25) [draw = black]  {};

\node (t1) at (-0.25,0.25) [draw = black]  {};
\node (t2) at (-0.25,0.625) [draw = black]  {};
\node (t3) at (-0.125,0.875) [draw = black]  {};

\node (t11) at (0.25,0.25) [draw = black]  {};
\node (t12) at (0.25,0.625) [draw = black]  {};
\node (t13) at (0.375,0.875) [draw = black]  {};

\node (t21) at (1,0.25) [draw = black]  {};
\node (t22) at (1,0.625) [draw = black]  {};
\node (t23) at (1.125,0.875) [draw = black]  {};

\node (t31) at (1.75,0.25) [draw = black]  {};
\node (t32) at (1.75,0.625) [draw = black]  {};
\node (t33) at (1.875,0.875) [draw = black]  {};

\foreach \from/\to in {t1/t2,t2/t3,t1/t3,p1/p2,p2/p3,t11/t12,t11/t13,t12/t13, t21/t22,t21/t23,t22/t23, t31/t32,t31/t33,t32/t33,t11/t21,t21/t31,t12/t22,t22/t32,t13/t23,t23/t33}
    \draw (\from) -- (\to);
\end{tikzpicture}
\end{center}
\caption{The figure shows a $K_3$, a path of length $2$, and the Cartesian product of these two graphs.}
\label{figCartEx}
\end{figure}
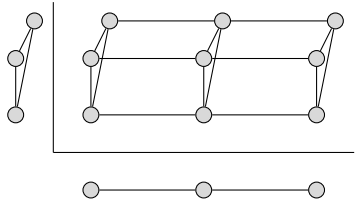

\begin{theorem}
\label{thmCartesian}
Let $G_1$ and $G_2$ be graphs. Let $k = \chi(G_1)\chi(G_2)$, and let $t = \max\{\col^B_g(G_1),\col^B_g(G_2)\}$. Then $$\chi_g(G_1 \square G_2) \leq k((k-1)(t-1) + 1).$$
\end{theorem}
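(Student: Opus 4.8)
The plan is to deduce this from Corollary~\ref{corNonreactive}: it suffices to produce a proper $k$-coloring of $G_1 \square G_2$, with $k = \chi(G_1)\chi(G_2)$, in which every bicolored subgraph $H$ satisfies $\Bob(H) \le t = \max\{\col^B_g(G_1),\col^B_g(G_2)\}$. One uses Corollary~\ref{corNonreactive} rather than Theorem~\ref{thmMark} because the factors are controlled only via $\col^B_g$, with no claim of reactive strategies, and because the conclusion of Corollary~\ref{corNonreactive} is exactly the bound asserted here. Fix proper colorings $\phi_1$ of $G_1$ and $\phi_2$ of $G_2$ using $\chi(G_1)$ and $\chi(G_2)$ colors, and color each $(u,v) \in V(G_1) \times V(G_2)$ by $\phi(u,v) = (\phi_1(u),\phi_2(v))$. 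Every edge of $G_1 \square G_2$ changes exactly one coordinate, replacing that coordinate by an adjacent vertex of the corresponding factor, so since $\phi_1$ and $\phi_2$ are proper, $\phi$ is a proper $k$-coloring.

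The crux is the structure of the bicolored subgraphs. Let $H$ be the subgraph of $G_1 \square G_2$ induced by the two color classes of colors $(a_1,a_2)$ and $(b_1,b_2)$. If $a_1 \ne b_1$ and $a_2 \ne b_2$, then $H$ is edgeless, since an edge joining the two classes would have to change both coordinates; hence $\Bob(H) = 1 \le t$. If $a_1 = b_1$ (the case $a_2 = b_2$ being symmetric), then no edge of $H$ can change the first coordinate, as doing so would join two vertices of $G_1$ with equal $\phi_1$-color, contradicting properness of $\phi_1$; so $H$ is a disjoint union, with one part for each vertex $u$ of $G_1$ having $\phi_1(u) = a_1$, of copies of the induced subgraph $K = G_2[\phi_2^{-1}(\{a_2,b_2\})]$ of $G_2$.

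It remains to bound $\Bob(H)$ in this last case. Alice can play the Bob marking game on the disjoint union $H$ copy by copy, using in each copy a strategy witnessing $\Bob(K)$; the extra red moves she is occasionally forced to make in a copy are harmless because only Bob's marks count towards scores, so $\Bob(H) \le \Bob(K)$. Then $\Bob(K) \le \col^B_g(K) \le \col^B_g(G_2) \le t$: the first inequality is the one recorded before Theorem~\ref{thmMark}, and the middle one is the monotonicity of $\col^B_g$ under taking induced subgraphs, which is the Bob-first analogue of the Wu--Zhu monotonicity of $\col_g$~\cite{WuLower} and follows from the same imagined-game simulation (Alice plays a marking game on $G_2$, making an arbitrary real move whenever her simulated move falls outside $K$). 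Thus every bicolored subgraph $H$ of $(G_1 \square G_2, \phi)$ has $\Bob(H) \le t$, and Corollary~\ref{corNonreactive} gives $\chi_g(G_1 \square G_2) \le k((k-1)(t-1)+1)$. I expect the only step requiring real care to be the monotonicity of $\col^B_g$ together with the componentwise play on disjoint unions, which I would isolate as a short lemma; it is routine, and I do not foresee a genuine obstacle.
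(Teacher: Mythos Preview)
Your proposal is correct and follows essentially the same route as the paper: define the product coloring $\phi=(\phi_1,\phi_2)$, show that every bicolored subgraph is (componentwise) isomorphic to a subgraph of $G_1$ or of $G_2$, bound its $\col_g^B$ (hence $\Bob$) by $t$ via subgraph monotonicity and the componentwise maximum, and then invoke Corollary~\ref{corNonreactive}. Your treatment is in fact slightly more explicit than the paper's in separating the case where the two colors differ in both coordinates (yielding an edgeless $H$) and in flagging the monotonicity of $\col_g^B$ and the disjoint-union reduction as the only points needing a short simulation argument.
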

\begin{proof}
Let $\phi_1:E(G_1) \rightarrow \{1,2,\dots,\chi(G_1)\}$ be a proper coloring of $G_1$, and let $\phi_2:E(G_2) \rightarrow \{1,2,\dots,\chi(G_2)\}$ be a proper coloring of $G_2$. For each pair $v_1 \in V(G_1)$, $v_2 \in V(G_2)$, we may color the corresponding vertex $(v_1, v_2) \in V(G_1 \square G_2)$ with the color $(\phi_1(v_1),\phi_2(v_2))$, which gives us a proper coloring 
$$\phi:E(G_1 \square G_2) \rightarrow \{1,2,\dots,\chi(G_1)\} \times \{1,2,\dots,\chi(G_2)\}$$
 using $k$ colors.

We claim that each connected bicolored subgraph $H$ of $G_1 \square G_2$ under $\phi$ satisfies $\col_g^B(H) \leq t$. Indeed, let $H \subseteq G_1 \square G_2$ be a connected bicolored subgraph with respect to $\phi$. If $H$ is a single vertex, then $\col^B_g(H) = 1$; otherwise, $H$ has at least one edge $e$. We assume without loss of generality that $e$ has endpoints $(u,v_1), (u,v_2)$, where $u \in V(G_1)$, and $v_1, v_2 \in V(G_2)$, and hence that $H$ is colored with the colors $(\phi_1(u),\phi_2(v_1))$ and $(\phi_1(u),\phi_2(v_2))$. If every vertex of $H$ is of the form $(u,v)$ for some $v \in V(G_2)$, then $H$ is isomorphic to a subgraph of $G_2$, and hence $\col_g^B(H) \leq t$. Otherwise, as $H$ is connected, $H$ must contain a vertex of the form $(u',v)$, where $u' \in V(G_1)$ is a neighbor of $u$ in $G_1$, and $v \in V(G_2)$ is any vertex in $G_2$. However, as $u$ and $u'$ are neighbors, $\phi_1(u) \neq \phi_1(u')$, so $\phi(u',v)$ cannot be one of $(\phi_1(u),\phi_2(v_1))$ and $(\phi_1(u),\phi_2(v_2))$, a contradiction to the assumption that $H$ is bicolored. Therefore, $H$ is isomorphic to a subgraph of $G_2$, and $\Bob(H) \leq \col_g^B(H) \leq t$. The same upper bound holds even if $H$ is not connected, as $\col_g^B(H)$ is equal to the maximum value $\col_g^B(H')$ over all components $H'$ of $H$. As Alice does not necessarily have a reactive strategy with respect to the game coloring numbers of $G_1$ and $G_2$, we apply Corollary \ref{corNonreactive} with our values $k$ and $t$, and we obtain an upper bound of $\chi_g(G_1 \square G_2) \leq k((k-1)(t-1) + 1)$, which completes the proof.
\end{proof}

We note that given $r$ graphs $G_1, \dots, G_r$, we may use the same method to obtain the upper bound $\chi_g(G_1 \square \cdots \square G_r) \leq k((k-1)(t-1)+1)$, where $k = \chi(G_1) \cdots \chi(G_r)$, and $t = \max\{\col^B_g(G_1),\dots, \col^B_g(G_r)\}$.

\begin{corollary}
\label{corBd}
Let $G_1, G_2$ be graphs, and let $t = \max\{\col_g(G_1),\col_g(G_2)\}$. Then $$\chi_g(G_1 \square G_2) \leq t^2((t^2-1)t + 1) = t^5  -t^3 + t^2.$$
\end{corollary}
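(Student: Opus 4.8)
The plan is to deduce Corollary \ref{corBd} directly from Theorem \ref{thmCartesian} by substituting crude but valid estimates for the two parameters appearing there. First I would recall that for any graph $G$ we have $\chi(G) \leq \col_g(G)$, since the game coloring number dominates the ordinary coloring number and hence the chromatic number. Applying this to $G_1$ and $G_2$ and using the definition $t = \max\{\col_g(G_1), \col_g(G_2)\}$, we get $\chi(G_1)\chi(G_2) \leq \col_g(G_1)\col_g(G_2) \leq t^2$. So the value $k$ in Theorem \ref{thmCartesian} satisfies $k \leq t^2$.

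Next I would control the parameter playing the role of $t$ in Theorem \ref{thmCartesian}, namely $\max\{\col_g^B(G_1), \col_g^B(G_2)\}$. The excerpt already notes that $\col_g^B(G) \leq \col_g(G) + 1$ for every graph $G$, so this parameter is at most $t + 1$. At this point one might worry that plugging $k \leq t^2$ and ``$t$'' $\leq t+1$ into the bound $k((k-1)(t-1)+1)$ of Theorem \ref{thmCartesian} gives something slightly larger than the claimed $t^2((t^2-1)t+1)$. The resolution is to observe that the bound in Theorem \ref{thmCartesian} is monotone increasing in both $k$ and $t$, so it suffices to check the inequality with the substituted values; but to land exactly on $t^2((t^2-1)t+1)$ one uses the sharper route: apply Theorem \ref{thmCartesian} and then bound, absorbing the ``$+1$'' — alternatively, and more cleanly, one notes that the strategy of Theorem \ref{thmCartesian} only ever uses that each bicolored subgraph $H$ is isomorphic to a subgraph of $G_1$ or $G_2$, so $\Bob(H) \leq \col_g^B(H) \leq \col_g(H)+1 \leq \col_g(G_i)+1$; but in fact one can also simply invoke the subgraph monotonicity $\col_g(H) \le \col_g(G_i) \le t$ together with $\Bob(H)\le \col_g^B(H) \le \col_g(H)+1$, and since $H$ is a forest-free general subgraph the cleanest uniform statement is $\Bob(H)\le t+1$.

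Carrying this out, Corollary \ref{corNonreactive} (as invoked inside the proof of Theorem \ref{thmCartesian}) applied with the coloring $\phi$ of $G_1\square G_2$ having $k' := \chi(G_1)\chi(G_2) \le t^2$ colors and the uniform bound $\Bob(H) \le t' := t+1$ on bicolored subgraphs would give $\chi_g(G_1\square G_2) \le k'((k'-1)(t'-1)+1) \le t^2((t^2-1)t + 1)$, where the last step uses $t' - 1 = t$ and the monotonicity of $x \mapsto x((x-1)t+1)$ in $x$ to replace $k'$ by $t^2$. Expanding the polynomial $t^2((t^2-1)t+1) = t^2(t^3 - t + 1) = t^5 - t^3 + t^2$ gives the closed form claimed.

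The only genuinely delicate point is getting the exponent bookkeeping to match the stated bound rather than a slightly weaker one: one must make sure to pair $\col_g^B \le \col_g + 1$ with the ``$t-1$'' that appears in the Corollary \ref{corNonreactive} bound, so that the ``$+1$'' is exactly cancelled and one is left with a clean factor of $t$, not $t+1$, inside. Everything else is a one-line substitution of $\chi \le \col_g$ and the subgraph/first-player inequalities already recorded in the excerpt, followed by a routine expansion of a degree-five polynomial; there is no combinatorial obstacle here since all the strategic work was done in Theorem \ref{thmMark}, Corollary \ref{corNonreactive}, and Theorem \ref{thmCartesian}.
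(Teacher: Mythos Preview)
Your proposal is correct and follows essentially the same approach as the paper: bound $k=\chi(G_1)\chi(G_2)\le t^2$ via $\chi\le\col_g$, bound $\col_g^B(G_i)\le\col_g(G_i)+1\le t+1$, and substitute into Theorem~\ref{thmCartesian} so that the ``$-1$'' cancels the ``$+1$'' to leave a clean factor of $t$. The paper's proof is a one-sentence version of exactly this; your meandering middle paragraph (and the stray phrase ``forest-free general subgraph'') can be dropped, but the substance is the same.
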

\begin{proof}
The bound follows directly from Theorem \ref{thmCartesian} after applying the inequalities $\chi(G_i) \leq \chi_g(G_i) \leq \col_g(G_i) \leq t$ and $\col_g^B(G_i) \leq t+1$ for $i = 1,2$.
\end{proof}

Corollary \ref{corBd} answers a question of Zhu from \cite{ZhuCartesian} asking if $\chi_g(G_1 \square G_2)$ is bounded whenever $\col_g(G_1)$ and $\col_g(G_2)$ are bounded. Zhu asks this question for the graph coloring game in which Bob moves first, but the original strategy from Theorem \ref{thmMark} works the same regardless of which player moves first.
The upper bounds of Theorem \ref{thmCartesian} and Corollary \ref{corBd} are often far from tight, however. For example, Theorem \ref{thmCartesian} tells us that the game chromatic number of the Cartesian product of two planar graphs is at most $16(15\cdot 16 + 1) = 3856$, but using a different method, Zhu \cite{ZhuCartesian} obtains a sharper upper bound of $105$. Furthermore, in the following example, we show two graphs $G_1$ and $G_2$ for which the Cartesian product $G_1 \square G_2$ has a game chromatic number equal to the trivial lower bound of $\chi(G_1 \square G_2) = \max\{\chi(G_1),\chi(G_2)\}$, which is far from the upper bound given in Theorem \ref{thmCartesian}.

For an even integer $n \geq 2$, let $G_1$ be the union of a complete graph $K_n$ and a single isolated vertex, and let $G_2$ be the union of an edge $K_2$ and a single isolated vertex. We illustrate $G_1$, $G_2$, and their Cartesian product in Figure \ref{figCartesian}. $G_1 \square G_2$ has four components: a $K_n$ component, a $K_n \square K_2$ component, a single vertex component, and a $K_2$ component. We observe that $\chi(G_1 \square G_2) = n$, and we will show that $\chi_g(G_1 \square G_2) = n$ by giving a strategy using $n$ colors with which Alice may win the graph coloring game on $G_1 \square G_2$. In comparison, the upper bound for $\chi_g(G_1 \square G_2)$ given by Theorem \ref{thmCartesian} is $4n^3 -2n^2 + 2n$, which is far from optimal.

Alice's strategy is as follows. On the first move, Alice colors the isolated vertex of $G_1 \square G_2$ with any color. Then, whenever Bob colors a vertex in a component $C$ of $G_1 \square G_2$, Alice colors a vertex of $C$ on the next move. As each component of $G_1 \square G_2$ of size at least $2$ has an even number of vertices, Alice will always be able to respond to Bob by coloring a vertex in the same component that Bob just colored, provided that each uncolored vertex still has a legal color. Therefore, in order to show that $\chi_g(G_1 \square G_2) = n$, it suffices to show that Alice wins the coloring game with $n$ colors on each component of $G_1 \square G_2$ of size at least $2$ when Bob moves first.

It is clear that Alice wins the coloring game on $K_2$ and $K_n$ with $n$ colors when Bob moves first; thus, we will only explicitly describe Alice's strategy for winning the coloring game on $K_n \square K_2$. Let $K_n \square K_2$ have $2n$ vertices $u_0, \dots, u_{n-1}, v_0, \dots, v_{n-1}$, so that $u_i \sim u_j$ and $v_i \sim v_j$ for each pair $0 \leq i < j \leq n-1$, and so that $u_i \sim v_i$ for each $0 \leq i \leq n-1$. Alice will play as follows. Whenever Bob colors a vertex $u_i$ with a color $c$, Alice will respond by coloring $v_{i+1}$ with $c$, and whenever Bob colors a vertex $v_i$ with a color $c$, Alice will respond by coloring $u_{i-1}$ with $c$, with addition calculated modulo $n$. It is easy to check that after each of Alice's turns, the partial coloring on $u_0, \dots, u_{n-1}$ is equal to the partial coloring at $v_0, \dots, v_{n-1}$, but ``shifted down" by one. Therefore, Alice's strategy always gives her a legal move, and together Alice and Bob will complete a proper coloring of $K_n \square K_2$ using $n$ colors. Therefore, $\chi_g(G_1 \square G_2) = \chi(G_1 \square G_2) = n$, which is much smaller than the upper bound we would obtain from Theorem \ref{thmCartesian}. 

\begin{figure}
\begin{center}
\begin{tikzpicture}
[scale=2,auto=left,every node/.style={circle,fill=gray!30},minimum size = 6pt,inner sep=1pt]
\node (z) at (0.875,-0.25) [fill = white]  {$K_n$};
\node (z) at (0.875,0.375) [fill = white]  {$K_n \square K_2$};
\node (z) at (0.875,0.875) [fill = white]  {$K_n$};
\node (z) at (1,-0.625) [fill = white]  {$G_1$};
\node (z) at (-0.625,0.5) [fill = white]  {$G_2$};
\draw (0,0)--(2,0);
\draw (0,0)--(0,1);
\draw (0.25,-0.125)--(1.5,-0.125);
\draw (0.25,-0.375)--(1.5,-0.375);
\draw (0.25,-0.375)--(0.25,-0.125);
\draw (1.5,-0.125)--(1.5,-0.375);

\draw (0.25,0.125)--(1.5,0.125);
\draw (0.25,0.625)--(1.5,0.625);
\draw (0.25,0.125)--(0.25,0.625);
\draw (1.5,0.125)--(1.5,0.625);

\draw (0.25,.75)--(1.5,.75);
\draw (0.25,1)--(1.5,1);
\draw (0.25,1)--(0.25,.75);
\draw (1.5,.75)--(1.5,1);

\node (z) at (1.75,-0.25) [draw = black]  {};
\node (a) at (-0.25,0.25) [draw = black]  {};
\node (b) at (-0.25,0.5) [draw = black]  {};
\node (z) at (-0.25,0.875) [draw = black]  {};
\node (z) at (1.75,0.875) [draw = black]  {};

\node (a2) at (1.75,0.25) [draw = black]  {};
\node (b2) at (1.75,0.5) [draw = black]  {};

\foreach \from/\to in {a/b,a2/b2}
    \draw (\from) -- (\to);
\end{tikzpicture}
\end{center}
\caption{The figure shows two graphs $G_1$ and $G_2$ along with their Cartesian product $G_1 \square G_2$. In this example, $\chi_g(G_1 \square G_2) = \chi(G_1 \square G_2)$, showing that the upper bound in Theorem \ref{thmCartesian} may be far from optimal.}
\label{figCartesian}
\end{figure}
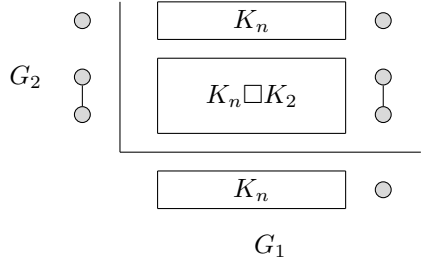

Next, Theorem \ref{thmMark} allows us to establish the following result about the strong product of two graphs. Given two graphs $G_1$ and $G_2$, the \emph{strong product} of $G_1$ and $G_2$, written $G_1 \boxtimes G_2$, is defined as the graph on the vertex set $V(G_1) \times V(G_2)$ in which two vertices $(u,v)$ and $(u',v')$ are adjacent if and only if both of the following hold:
\begin{itemize}
\item $u = u'$, or $u \sim u'$ in $G_1$;
\item $v = v'$, or $v \sim v'$ in $G_2$.
\end{itemize}
An example of the strong product of two graphs is illustrated in Figure \ref{figStrongEx}. Furthermore, given a graph $G$, the \emph{square} of $G$, written $G^2$, is defined as the graph on $V(G)$ in which two distinct vertices $u,v \in V(G)$ are adjacent in $G^2$ if and only if $u$ and $v$ are at a distance of at most $2$ in $G$. With these definitions in place, we have the following result.

\begin{figure}
\begin{center}
\begin{tikzpicture}
[scale=2,auto=left,every node/.style={circle,fill=gray!30},minimum size = 6pt,inner sep=1pt]
\draw (0,0)--(2,0);
\draw (0,0)--(0,1);

\node (p1) at (0.25,-0.25) [draw = black]  {};
\node (p2) at (1,-0.25) [draw = black]  {};
\node (p3) at (1.75,-0.25) [draw = black]  {};

\node (t1) at (-0.25,0.25) [draw = black]  {};
\node (t2) at (-0.25,0.625) [draw = black]  {};
\node (t3) at (-0.125,0.875) [draw = black]  {};

\node (t11) at (0.25,0.25) [draw = black]  {};
\node (t12) at (0.25,0.625) [draw = black]  {};
\node (t13) at (0.375,0.875) [draw = black]  {};

\node (t21) at (1,0.25) [draw = black]  {};
\node (t22) at (1,0.625) [draw = black]  {};
\node (t23) at (1.125,0.875) [draw = black]  {};

\node (t31) at (1.75,0.25) [draw = black]  {};
\node (t32) at (1.75,0.625) [draw = black]  {};
\node (t33) at (1.875,0.875) [draw = black]  {};

\foreach \from/\to in {t1/t2,t2/t3,t1/t3,p1/p2,p2/p3,t11/t12,t11/t13,t12/t13, t21/t22,t21/t23,t22/t23, t31/t32,t31/t33,t32/t33,t11/t21,t21/t31,t12/t22,t22/t32,t13/t23,t23/t33,t11/t22,t11/t23,t12/t21,t12/t23,t13/t21,t13/t22,t21/t32,t21/t33,t22/t31,t22/t33,t23/t31,t23/t32}
    \draw (\from) -- (\to);
\end{tikzpicture}
\end{center}
\caption{The figure shows a $K_3$, a path of length $2$, and the strong product of these two graphs.}
\label{figStrongEx}
\end{figure}
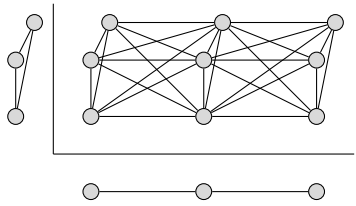

\begin{theorem}
\label{thmStrong}
Let $G_1$ and $G_2$ be graphs, let $t = \col_g(G_1)$, and let $k = \chi (G_1) \chi(G_2^2)$. Then $$\chi_g(G_1 \boxtimes G_2) \leq k((k-1)t + 1).$$
\end{theorem}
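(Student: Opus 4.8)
The plan is to produce a proper $k$-coloring of $G_1 \boxtimes G_2$ all of whose bicolored subgraphs have small game coloring number, and then feed this coloring into Corollary~\ref{corNonreactive} (we use the corollary rather than Theorem~\ref{thmMark} because we have no control over whether the relevant marking strategies on $G_1$ are reactive). Let $\phi_1$ be a proper coloring of $G_1$ with $\chi(G_1)$ colors, and let $\psi$ be a proper coloring of $G_2^2$ with $\chi(G_2^2)$ colors; equivalently, $\psi$ properly colors $G_2$ so that any two vertices at distance at most $2$ in $G_2$ receive distinct colors. Define $\phi(u,v) = (\phi_1(u), \psi(v))$ for each $(u,v) \in V(G_1 \boxtimes G_2)$. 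A short check using the definition of the strong product shows $\phi$ is proper with $k = \chi(G_1)\chi(G_2^2)$ colors: if $(u,v) \sim (u',v')$ and $u \neq u'$ then $u \sim u'$ in $G_1$, so the first coordinates of $\phi$ differ, while if $u = u'$ then $v \sim v'$ in $G_2$, so the second coordinates differ.

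The core of the argument is a structural claim: every connected bicolored subgraph $H$ of $G_1 \boxtimes G_2$ with respect to $\phi$ is either isomorphic to a subgraph of $G_1$ or has maximum degree at most $1$. I would prove this by a case analysis on the two colors $(a,b)$ and $(a',b')$ present in $H$. If $a = a'$ (so $b \neq b'$), properness of $\phi_1$ forces every edge of $H$ to stay inside one fiber $\{u\} \times V(G_2)$, so $H$ is isomorphic to a bicolored subgraph of $G_2$ with respect to $\psi$; since $\psi$ is a distance-$2$ coloring, no vertex of such a subgraph can have two neighbours (they would share a color class while lying at distance at most $2$ in $G_2$), so $\Delta(H) \leq 1$. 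If $a \neq a'$ and $b = b'$, the analogous reasoning forces every edge of $H$ to stay inside one fiber $V(G_1) \times \{v\}$, and then $H$ is isomorphic to a subgraph of $G_1$. The remaining case, $a \neq a'$ and $b \neq b'$, is the one requiring real work: here every edge of $H$ joins the two color classes and changes both coordinates, so the projection $\pi \colon (u,v) \mapsto u$ sends $H$ homomorphically onto a subgraph of $G_1$, and it suffices to show $\pi$ is injective on $V(H)$. If $(u,v_1),(u,v_2) \in V(H)$ share a first coordinate, their colors coincide (being determined by $\phi_1(u)$ because $a \neq a'$), so $\psi(v_1) = \psi(v_2)$; taking a path between them in $H$ and reading off the $G_2$-coordinates yields a walk $v_1 = w_0 \sim w_1 \sim \cdots \sim w_m = v_2$ in $G_2$ whose $\psi$-colors alternate, and the distance-$2$ property of $\psi$ forces $w_0 = w_2 = \cdots = w_m$, hence $v_1 = v_2$.

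Given the structural claim the conclusion follows quickly. If a connected bicolored subgraph $H$ is isomorphic to a subgraph of $G_1$, then by subgraph monotonicity of the game coloring number $\col_g(H) \leq \col_g(G_1) = t$, so $\col_g^B(H) \leq \col_g(H) + 1 \leq t+1$ and hence $\Bob(H) \leq t+1$; if instead $\Delta(H) \leq 1$, then $H$ is a matching together with isolated vertices, so $\col_g^B(H) \leq 2 \leq t+1$ and again $\Bob(H) \leq t+1$. Since $\col_g^B$, and therefore $\Bob$, of a disconnected graph equals the maximum of its values over the components, $\Bob(H) \leq t+1$ holds for every bicolored subgraph $H$ of $G_1 \boxtimes G_2$, connected or not. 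Applying Corollary~\ref{corNonreactive} with the bound $t+1$ in place of $t$ then gives
$$\chi_g(G_1 \boxtimes G_2) \leq k\big((k-1)\big((t+1)-1\big) + 1\big) = k\big((k-1)t + 1\big),$$
as desired.

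I expect the main obstacle to be the last case of the structural claim, namely showing that when both coordinates of the two colors differ, the projection to $G_1$ is injective on each connected bicolored subgraph. This is precisely the step that forces us to color the second coordinate with a proper coloring of $G_2^2$ rather than merely of $G_2$, and it explains why the parameter $k$ in the statement involves $\chi(G_2^2)$; the alternating-walk argument above is the natural way to exploit the distance-$2$ property. The only other point needing care is the ``same first color'' case, where bicolored subgraphs are matchings instead of subgraphs of $G_1$, but this is harmless: matchings have game coloring number $2$, which is absorbed by $t+1 \geq 2$.
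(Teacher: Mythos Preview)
Your proof is correct and follows essentially the same approach as the paper: construct the product coloring $\phi = (\phi_1,\psi)$, show that every connected bicolored subgraph either embeds in $G_1$ or has maximum degree at most $1$, deduce $\Bob(H) \leq t+1$, and apply Corollary~\ref{corNonreactive}. Your organization is in fact slightly cleaner than the paper's---by comparing against $t+1$ rather than $t$ in the matching case you avoid the separate treatment of the degenerate situation where $G_1$ is edgeless, and your alternating-walk argument for injectivity of the projection in the diagonal case makes explicit what the paper asserts more tersely.
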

\begin{proof}
Let $\phi_1:V(G_1) \rightarrow \{1,2, \dots, \chi(G_1)\}$ be a proper coloring of $G_1$, and let $\phi_2:V(G_2) \rightarrow \{1,2,\dots,\chi(G^2_2)\}$ be a proper coloring of $G^2_2$. As in Theorem \ref{thmCartesian}, we define a proper coloring 
$$\phi:E(G_1 \boxtimes G_2) \rightarrow \{1,2, \dots, \chi(G_1)\} \times \{1,2,\dots, \chi(G_2^2)\}$$
using $k$ colors by coloring each vertex $(u,v) \in G_1 \boxtimes G_2$ such that $\phi(u,v) = (\phi_1(u), \phi_2(v))$.

If $G_1$ is an independent set, then $G_1 \boxtimes G_2$ consists of copies of $G_2$, so $\chi_g(G_1 \boxtimes G_2) \leq \Delta(G_2) + 1  \leq \chi(G_2^2)$, as $G_2^2$ has a clique of size $\Delta(G_2)+1$. Hence, the theorem holds in this case, and we thus assume that $G_1$ has at least one edge, and hence that $t \geq 2$.

Consider a connected bicolored subgraph $H$ of $G_1 \boxtimes G_2$ with respect to $\phi$. We aim to show that $\col_g(H) \leq t$. If $H$ contains no edge, then $\col_g(H) = 1$. If $H$ contains an edge of the form $(u_1,v)(u_2,v)$ for vertices $u_1,u_2 \in V(G_1)$ and $v \in V(G_2)$, then by the argument of Theorem \ref{thmCartesian}, $H$ is isomorphic to a subgraph of $G_1$, and hence $\col_g(H) \leq t$. Similarly, if $H$ contains an edge of the form $(u,v_1)(u,v_2)$ for vertices $u \in V(G_1)$ and $v_1,v_2 \in V(G_2)$, then by the argument of Theorem \ref{thmCartesian}, $H$ is isomorphic to a subgraph of $G_2$. However, as $\phi_2$ is a proper coloring of $G_2^2$, $v_2$ is the only neighbor of $v_1$ in $G_2$ with color $\phi_2(v_2)$, and $v_1$ is the only neighbor of $v_2$ in $G_2$ with color $\phi_2(v_1)$. Hence, $H$ must be isomorphic to $K_2$, and $\col_g(H) = 2 \leq t$. 

Finally, suppose $H$ contains an edge of the form $(u_1,v_1)(u_2,v_2)$ for two adjacent vertices $u_1,u_2 \in V(G_1)$ and two adjacent vertices $v_1,v_2 \in V(G_2)$. Again, as $\phi_2$ is a proper coloring of $G_2^2$, $v_1$ and $v_2$ must be the only vertices of $G_2$ that appear as the second entry in an element of $V(H)$. Furthermore, as $\phi_1$ is a proper coloring of $G_1$, every edge of $H$ must be of the form $(u,v_1)(u',v_2)$, where $u,u' \in V(G_1)$ may be any distinct pair of adjacent vertices in $G_1$. 
We recall that $H$ is colored with two colors and hence that $H$ is bipartite. Therefore, $H$ is isomorphic to the subgraph $G' \subseteq G_1$ induced by the vertices $u \in V(G_1)$ that appear in some pair $(u,v_i) \in V(H)$, where $i \in \{1,2\}$, and we see that the index $i$ of the pair $(u,v_i)$ in which a vertex $u \in V(G')$ appears indicates to which partite set of $G'$ the vertex $u$ belongs. Hence, $\col_g(H) \leq t$, and furthermore, $\col_g^B(H) \leq t + 1$.


In each case, the bound $\col^B_g(H) \leq t+1$ holds even when $H$ is not connected, as the value of $\col^B_g(H)$ is equal to the maximum value $\col^B_g(H')$ over all components $H'$ of $H$, and $\col^B_g(H') \leq \col_g(H') + 1 \leq t + 1$. Hence, we have a proper coloring $\phi$ of $G$ using $k$ colors in which $\col^B_g(H) \leq t  + 1$ holds for each bicolored subgraph $H$ of $G$. Then, the result follows from Corollary \ref{corNonreactive}.
\end{proof}
Theorem \ref{thmStrong} has the following corollary, which shows that a the strong product of a graph with bounded game coloring number and a second graph of bounded degree must have bounded game chromatic number. 
\begin{corollary}
\label{corStrong}
Let $G$ be a graph, and let $G'$ be a graph of maximum degree $\Delta$. Then 
$$\chi_g(G \boxtimes G') \leq \chi(G)^2 (\Delta^2+1)^2 \col_g(G) \leq (\Delta^2+1)^2 \col_g(G)^3.$$
\end{corollary}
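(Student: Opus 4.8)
The plan is to obtain this as a direct specialization of Theorem \ref{thmStrong}, applied with $G_1 = G$ and $G_2 = G'$, followed by two elementary estimates. With this choice we have $t = \col_g(G)$ and $k = \chi(G)\chi\bigl((G')^2\bigr)$, and Theorem \ref{thmStrong} yields
$$\chi_g(G \boxtimes G') \leq k\bigl((k-1)t + 1\bigr).$$
So the only work is to bound $k$ in terms of $\chi(G)$, $\Delta$, and $\col_g(G)$, and then to simplify the right-hand side.

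First I would bound $\chi\bigl((G')^2\bigr)$. Since $G'$ has maximum degree $\Delta$, any vertex $v$ has at most $\Delta$ neighbors in $G'$, and each such neighbor contributes at most $\Delta - 1$ further vertices at distance exactly $2$ from $v$; hence $v$ has at most $\Delta + \Delta(\Delta-1) = \Delta^2$ vertices at distance at most $2$, so $\Delta\bigl((G')^2\bigr) \leq \Delta^2$ and therefore $\chi\bigl((G')^2\bigr) \leq \Delta\bigl((G')^2\bigr) + 1 \leq \Delta^2 + 1$. Consequently $k = \chi(G)\chi\bigl((G')^2\bigr) \leq \chi(G)(\Delta^2 + 1)$. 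Next I would weaken the bound of Theorem \ref{thmStrong}: since $t = \col_g(G) \geq 1$, we have $(k-1)t + 1 \leq kt$, so $k\bigl((k-1)t+1\bigr) \leq k^2 t \leq \chi(G)^2(\Delta^2+1)^2\col_g(G)$, which is the first claimed inequality. For the second inequality I would invoke the standard chain $\chi(G) \leq \chi_g(G) \leq \col_g(G)$ (noted in the introduction), so that $\chi(G)^2 \leq \col_g(G)^2$, and substituting gives $\chi(G)^2(\Delta^2+1)^2\col_g(G) \leq (\Delta^2+1)^2\col_g(G)^3$.

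There is no genuine obstacle here: the corollary is purely a matter of plugging in parameters and estimating. The only two spots that warrant a sentence of care are the degree count for $(G')^2$ — where it is worth remarking that $\Delta + \Delta(\Delta-1)$ collapses exactly to $\Delta^2$, so the bound $\chi\bigl((G')^2\bigr) \leq \Delta^2 + 1$ loses nothing — and the verification that the passage from $k\bigl((k-1)t+1\bigr)$ to $k^2 t$ needs only $t \geq 1$, which is automatic. Everything else is routine substitution into Theorem \ref{thmStrong}.
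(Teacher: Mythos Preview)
Your proposal is correct and follows exactly the paper's approach: invoke Theorem \ref{thmStrong} with $G_1=G$, $G_2=G'$, bound $\chi((G')^2)\leq \Delta^2+1$, weaken $k((k-1)t+1)\leq k^2t$, and then use $\chi(G)\leq \col_g(G)$ for the second inequality. Your write-up is in fact more detailed than the paper's one-sentence proof, and the extra care you take with the degree count in $(G')^2$ and the step $(k-1)t+1\leq kt$ is accurate.
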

\begin{proof}
The chromatic number of the square of $G'$ is at most $\Delta^2+1$, so the result follows from Theorem \ref{thmStrong} by letting $t = \col_g(G)$, using the fact that $k \leq \chi(G)(\Delta^2+1)$, and noting that the upper bound of Theorem \ref{thmStrong} is at most $k^2t$.
\end{proof}
Corollary \ref{corStrong} tells us, for instance, that the strong product of any graph $G$ with a cubic graph has a game chromatic number of at most $100\col_g(G)^3$, and that the strong product of a planar graph $G$ with a graph of maximum degree $\Delta$ has a game chromatic number of at most $4^2 \cdot 17 (\Delta^2+1)^2 = 272 (\Delta^2+1)^2$. However, Theorem \ref{thmStrong} and Corollary \ref{corStrong} are likely far from best possible. Furthermore, if we consider two complete graphs $K_m$ and $K_n$, we see that $\chi_g(K_m \boxtimes K_n) = \chi(K_m \boxtimes K_n) = mn$, so it is possible for the strong product $G_1 \boxtimes G_2$ of two graphs $G_1$ and $G_2$ to have a game chromatic number equal to the trivial lower bound of $\chi(G_1 \boxtimes G_2)$, which is far from the upper bound of Theorem \ref{thmStrong}.
\section{Conclusion}
\label{secCon}
We have shown in Corollary \ref{corBd} that if two graphs $G_1$ and $G_2$ have their game coloring numbers bounded by a constant, then $\chi_g(G_1 \square G_2)$ is also bounded by a constant.
It seems natural to try to strengthen this result by asking whether $\col_g(G_1 \square G_2)$ is also bounded by a constant; however, Bartnicki et al.~\cite{Bartnicki} have shown $\col_g(G_1 \square G_2)$ is unbounded when $G_1 = G_2 = K_{1,n}$, while $\col_g(K_{1,n}) \leq 4$.

On the other hand, we have shown in Corollary \ref{corStrong} that given a graph $G_1$ of bounded game coloring number and a graph $G_2$ of bounded degree, $\chi_g(G_1 \boxtimes G_2)$ is bounded by a constant. However, the following question remains open, which could strengthen Corollary \ref{corBd} and Corollary \ref{corStrong}.
\begin{question}
Let $G_1$ and $G_2$ be graphs, and suppose that $\col_g(G_1)$ and $\col_g(G_2)$ are both bounded by a constant. Is it true that $\chi_g(G_1 \boxtimes G_2)$ is bounded by a constant?
\end{question}

\section{Acknowledgment}
I am grateful to Bojan Mohar for his helpful advice regarding the organization and and presentation of the results in this manuscript, and for pointing out an error in an earlier version of this manuscript. I am also grateful to the referees for their helpful comments.

\raggedright
\bibliographystyle{plain}
\bibliography{gameColoring}

\end{document}